\documentclass{svjour3}

\usepackage{amsfonts,amsmath}
\usepackage{graphicx}
\usepackage[utf8]{inputenc}
\usepackage{algorithm}
\usepackage{multicol}

\usepackage{hyperref}
\newcommand{\doi}[1]{DOI \href{https://doi.org/#1}{#1}} 

\usepackage[numbered]{matlab-prettifier}
\lstMakeShortInline[style=Matlab-editor]"

\DeclareMathOperator{\diag}{diag}
\DeclareMathOperator{\re}{Re}

\author{Kai Diethelm \and Safoura Hashemishahraki}

\institute{Kai Diethelm$^*$  \and
Safoura Hashemishahraki \at
Faculty of Applied Natural Sciences and Humanities (FANG), 
Technical University of Applied Sciences Würzburg-Schweinfurt, 
Ignaz-Schön-Str.~11, 97421 Schweinfurt, Germany\\
\email{$\{$kai.diethelm, safoura.hashemishahraki$\}$@thws.de}\\
$^*$corresponding author
}

\date{\today}

\title{A Stability Testing Algorithm for Incommensurate Fractional Differential Equation Systems}
\titlerunning{Stability of incommensurate fractional differential systems}

\begin{document}

\maketitle

\begin{abstract}
	We consider the question of determining whether or not a given system of fractional-order differential equations
	is (asymptotically) stable. In particular, we admit systems where each constituent equation may have its own order,
	independent of the orders of the other equations in the system, i.e.\ we discuss the so-called incommensurate 
	case. Exploiting ideas based in numerical linear algebra, we present an
	algorithm that answers this question and that is much simpler than 
	known methods. We discuss in detail the case of linear problems where the ratios of orders are rational
	and indicate how known techniques can be used to apply our findings also to general nonlinear
	problems with arbitrary orders.
	A MATLAB implementation of the code is provided.
\end{abstract}

\bigskip

\noindent\textbf{Keywords:}
	incommensurate fractional differential equation system; 
	asymptotic stability;
	numerical algorithm;
	nonlinear eigenvalue problem
	\bigskip
	
\noindent\textbf{2020 Mathematics Subject Classification:}
	65L07 $\cdot$ 34A08 $\cdot$ 34D20 

\section{Introduction}

Systems of fractional ordinary differential equations have recently become a topic of major interest,
e.g.\ in connection with the mathematical modeling of various technical or biological processes 
\cite{BM2019a,BM2019b,Di2013,T2019a,T2019b}.  Typically,
one distinguishes in this context so-called \emph{commensurate} systems where all equations of the system 
involve differential operators of the same order and \emph{incommensurate} systems where this is not the
case. A crucial question for such systems is whether or not they exhibit (asymptotic) stability. While this question
can be answered relatively easily for commensurate systems by a rather straightforward transfer of ideas used 
for differential equation systems of first order  \cite{DLL2007,M1998}, the stability analysis for incommensurate 
systems is much more
challenging. To the best of our knowledge, the only available method for finding out whether such an
incommensurate system is asymptotically stable has been described in \cite{DHTT2024}. While this algorithm
is constructive in nature, it is also technically highly involved and unwieldy to implement, and its application in 
practice is computationally very complex.  The aim of this paper is to provide an algorithm that achieves the
same goal in a much simpler manner.

To be more precise, we will throughout this paper assume that $d \in \{ 2, 3, 4, \ldots\}$ is a fixed number and that
$\alpha = (\alpha_1, \alpha_2, \ldots, \alpha_d) \in (0,1]^d$ and the constant ($d \times d$) matrix $A$ 
are given. Then we aim to investigate the $d$-dimensional differential equation system
\begin{equation}
	\label{eq:lin-system}
	D^\alpha x(t) = A x(t)
\end{equation}
with respect to the asymptotic stability of its solutions $x = (x_1, x_2, \ldots, x_d)^{\text T} : [0, \infty) \to \mathbb R^d$ 
for large arguments: Can we guarantee that
\emph{every} solution $x$ of the system \eqref{eq:lin-system} (no matter which initial values it possesses) 
satisfies $\lim_{t \to \infty} x(t) = 0$?
In eq.~\eqref{eq:lin-system} and elsewhere in this paper, we use the notation
\begin{equation}
	\label{eq:def-dalpha}
	D^\alpha x(t) = \left( D^{\alpha_1} x_1(t), D^{\alpha_2} x_2(t), \ldots, D^{\alpha_d} x_d(t) \right)^{\text T}
\end{equation}
where $D^1 y(t) = y'(t)$ denotes the classical first derivative of $y$ and, for $0 < \beta < 1$,
\begin{align}
	\label{eq:def-caputo}
	D^\beta y(t) &= D^1 J^{1-\beta} [y - y(0)] (t) \\
\intertext{and}
	J^{1-\beta} y(t) &= \frac 1 {\Gamma(1-\beta)} \int_0^t (t-\tau)^{-\beta} y(\tau) \, \text d \tau
\end{align}
are, respectively, the Caputo derivative of order $\beta$ and the Riemann-Liouville integral of order 
$1-\beta$ of the function $y$ with starting point $0$, cf.~\cite[Chapters 2 and~3]{Di2010}.

\begin{remark}
	\label{rem:incommens}
	When $\alpha_1 = \alpha_2 = \ldots = \alpha_d$, the asymptotic stability is well understood
	and can be investigated with standard methods; see, e.g., \cite[Corollary 1]{DLL2007}. We will therefore
	assume in the remainder of this paper that not all the $\alpha_k$ are identical to each other.
\end{remark}

From an analytical point of view, it is well known that the answer to the question above
is the crucial component in investigating general (possibly nonlinear) incommensurate systems 
of fractional differential equations, i.e.\ equations of the form
\begin{equation}
	\label{eq:fde}
	D^\alpha x(t) = f(t, x(t))
\end{equation}
where $f: [0, \infty) \times \mathbb R^d \to \mathbb R^d$ is such that the
system \eqref{eq:fde} possesses a unique solution $x : [0, \infty) \to \mathbb R^d$ for each
set of initial conditions $x_k(0) = x_{k,0}$ ($k = 1, 2, \ldots, d$) with arbitrary $x_{k,0} \in \mathbb R$.
Details on how the information obtained about the problem \eqref{eq:lin-system} can be 
transferred to systems of the type \eqref{eq:fde} under various conditions have already been discussed elsewhere,
e.g.\ in \cite[Section 5]{DHTT2024} and in \cite{TSSC2017}. 
Therefore, we will here concentrate on linear systems of the form \eqref{eq:lin-system} 
and only discuss the general (nonlinear) case by means of a detailed walk through an example,
see Subsection \ref{subs:nonlin-example}.

\section{Analytical Foundations}

For the analysis of systems of the type \eqref{eq:lin-system}, the concept introduced in the following definition 
is essential.

\begin{definition}
	\label{def:charfunct}
	The function 
	\[
		\chi : \mathbb C \to \mathbb C, \quad
		\lambda \mapsto \det \left( 
						\diag(\lambda^\alpha) - A \right)
	\]
	where
	\[
		\diag(\lambda^\alpha) := \begin{pmatrix}
							\lambda^{\alpha_1} & & & \\
							& \lambda^{\alpha_2} & & \\
							& & \ddots & \\
							& & & \lambda^{\alpha_d}
						\end{pmatrix}
	\]
	is called the \emph{characteristic function} of the differential equation system~\eqref{eq:lin-system}.
\end{definition}

Here and in the following, we interpret noninteger powers of a complex number in the sense of the
principal branch of the power functions, i.e.\ we say
\[
	z^\beta = |z|^\beta \exp (\text i \beta \arg(z))
\]
for all $z \in \mathbb C$ and $\beta \in \mathbb R_+ \setminus \mathbb N$.

The significance of the characteristic function in our context is apparent from the 
following result from \cite[Theorem 1]{DLL2007}.

\begin{theorem}
	\label{thm:char-zeros-1}
	Consider the differential equation system~\eqref{eq:lin-system}.
	If all complex zeros of its characteristic function $\chi$ are in the open left
	half of the complex plane then all solutions $x$ of the system satisfy $\lim_{t \to \infty} x(t) = 0$.
\end{theorem}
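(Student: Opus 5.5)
We plan to use the Laplace transform. Apply it componentwise to \eqref{eq:lin-system}. For $0<\alpha_k\le 1$ the Caputo derivative satisfies $\mathcal L[D^{\alpha_k}x_k](s) = s^{\alpha_k}X_k(s) - s^{\alpha_k-1}x_k(0)$. This gives the algebraic system
\[
	\left(\diag(s^\alpha) - A\right) X(s) = \diag(s^{\alpha-1})\, x(0),
\]
so that $X(s) = (\diag(s^\alpha)-A)^{-1}\diag(s^{\alpha-1})x(0)$ for $\re s$ large. By Cramer's rule, every component of $X$ is a finite sum of terms $s^{\gamma}P(s^{\alpha_1},\dots,s^{\alpha_d})/\chi(s)$ with polynomials $P$. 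Hence $X$ continues analytically to the plane cut along $(-\infty,0]$, except at the zeros of $\chi$. By hypothesis, none of those zeros lies in $\re s\ge 0$. Before the transform step, we justify it with the standard a priori bound: $x$ is continuous and of at most exponential growth. This follows from the equivalent Volterra integral equation and a Gronwall-type estimate.

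To recover $x$, we invert via the Bromwich integral and deform the contour to a Hankel-type path. This path consists of a small circle around the origin, the two rays along the cut on the negative real axis, and large arcs in the left half-plane. Since $\chi$ may have zeros with $\re\lambda<0$ off the cut, we pick up residues there. These contribute terms of the form $t^m e^{\lambda t}$, which decay exponentially. The integrals along the cut decay like negative powers of $t$.

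The key estimate concerns the behaviour near $s=0$, and this is where we expect the main difficulty. There $s^{\alpha_k}\to 0$, so $\chi(s)\to\det(-A)=\chi(0)$. The hypothesis gives $\chi(0)\neq0$, i.e.\ $A$ is invertible, because $0$ itself must not be a zero. Hence $X(s)\approx (-A)^{-1}\diag(s^{\alpha-1})x(0)$ has only singularities of type $s^{\alpha_k-1}$ at the origin. With $\alpha_k\le1$ these are integrable and produce decay $t^{-\alpha_k}$ or faster. A careful expansion (a Watson/Tauberian-type lemma on the Hankel contour) shows the origin contributes $O(t^{-\min\alpha_k})\to0$. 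In particular, no constant term appears, precisely because $A$ is invertible.

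We also need to control the large arcs. As $|s|\to\infty$, $\diag(s^\alpha)-A$ is dominated by the diagonal, so $X_k(s)\sim s^{-1}x_k(0)$ plus smaller terms. Jordan's lemma then applies on the arcs, which lie in the left half-plane. A remaining technical point is that on the cut the two branches $s^{\alpha_k}$ from above and below must keep $\chi$ nonzero. Zeros of $\chi$ on the cut of the principal branch are excluded from the hypothesis because the cut is not in the domain. Therefore, if $\chi$ vanishes at a cut point approached from one side, we indent around it and bound that contribution as well. Assembling the pieces, each component $x_k(t)$ is a finite sum of exponentially decaying residue terms plus cut integrals that are $o(1)$, hence $\lim_{t\to\infty}x(t)=0$.
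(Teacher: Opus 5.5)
The paper gives no proof of this statement: it imports it from \cite[Theorem~1]{DLL2007}. The argument there, as I recall it, starts from the same transformed system $(\diag(s^\alpha)-A)X(s)=\diag(s^{\alpha-1})x(0)$. It then concludes in one line with the final-value theorem, $\lim_{t\to\infty}x(t)=\lim_{s\to 0}sX(s)=0$, using only that $\chi$ has no zeros in $\re s\ge 0$.

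Your proposal is correct and genuinely different at the second step. You invert the transform explicitly: deform the Bromwich contour, collect exponentially decaying residues, and extract the behaviour at the branch point with a Watson-type estimate. There $\chi(0)=\det(-A)\neq 0$ rules out a $1/s$ term.

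What each route buys:
\begin{itemize}
\item The citation buys brevity. However, the final-value theorem identifies $\lim_{t\to\infty}x(t)$ only once that limit is known to exist, or under a comparable Tauberian side condition, and existence of the limit is essentially the claim.
\item Your route proves convergence outright and even gives the rate $|x(t)|=O(t^{-\min_k\alpha_k})$. The price is the technical bookkeeping you list. To it you should add a justification of the inversion formula, e.g.\ show that the deformed contour integral defines a continuous function whose transform is $X$, and invoke uniqueness of the Laplace transform.
\end{itemize}

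Two small remarks.
\begin{itemize}
\item The paper's $\chi$ is defined on all of $\mathbb C$ via the principal branch. Zeros on the negative real axis are therefore simply zeros in the open left half-plane, not points outside the domain; your indentation handles them anyway.
\item You can avoid the cut, the residues and the indentations altogether. The finitely many zeros all satisfy $|\arg\lambda|>\pi/2+\delta$ for some $\delta>0$, so the Bromwich line can be deformed onto the rays $\arg s=\pm(\pi/2+\delta)$ without crossing any zero. On these rays $|e^{st}|=e^{-t|s|\sin\delta}$ and $|X(s)|\le C\min\{|s|^{\min_k\alpha_k-1},|s|^{-1}\}$. These two bounds give the $O(t^{-\min_k\alpha_k})$ decay directly.
\end{itemize}
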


In other words: The system \eqref{eq:lin-system} is asymptotically stable
if all zeros $\lambda$ of its characteristic function satisfy $\re \lambda < 0$ or, equivalently, $\lambda \ne 0$ and
$|\arg (\lambda)| > \pi/2$.

The converse statement is true as well, see \cite[Theorem 1]{BK2018} where this has been shown explicitly for the
case $d=2$; it is clear that the argument can be applied for $d=3,4,\ldots$ in exactly the same way:
\begin{theorem}
	\label{thm:char-zeros-2}
	If the differential equation system~\eqref{eq:lin-system} is asymptotically
	stable then all complex zeros of its characteristic function $\chi$ are in the open left
	half of the complex plane.
\end{theorem}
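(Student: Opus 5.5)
We argue by contraposition: assuming $\chi$ has a zero $\lambda_0$ with $\re \lambda_0 \ge 0$, we construct a solution of \eqref{eq:lin-system} that does not tend to $0$. The tool is the Laplace transform. For a solution $x$ with initial value $x(0)=x_0$, the Caputo derivative transforms as $\mathcal L[D^{\alpha_k} x_k](s) = s^{\alpha_k} X_k(s) - s^{\alpha_k-1} x_{k,0}$. Hence, wherever the matrix is invertible,
\[
	X(s) = \left(\diag(s^\alpha) - A\right)^{-1} \diag(s^{\alpha-1})\, x_0 ,
\]
with $\diag(s^{\alpha-1})$ defined in the obvious way. Since solutions of linear Caputo systems grow at most exponentially, $X$ is analytic in some half plane $\re s > \sigma_0$. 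By uniqueness of analytic continuation, $X$ coincides there with the meromorphic function on $\mathbb C \setminus (-\infty,0]$ given by the right-hand side.

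The key step is to choose $x_0$ so that $X$ really has a singularity at $\lambda_0$ (or at $0$). First suppose $\lambda_0 \ne 0$, so that $\lambda_0$ lies in the closed right half plane minus the origin, where all power functions are analytic. Then $\chi(\lambda_0)=0$, and $\chi$ is analytic and not identically zero (it tends to infinity along the positive real axis). The entries of the inverse are cofactors divided by $\chi$, and $\diag(s^{\alpha-1})$ is invertible near $\lambda_0$. Consequently, $\left(\diag(s^\alpha)-A\right)^{-1}$ has a genuine pole at $\lambda_0$, so some column of it is singular there. Taking $x_0$ as the corresponding unit vector (rescaled by the invertible diagonal factor if necessary) gives $X$ a pole at $\lambda_0$.

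Now assume, for contradiction, that every solution satisfies $x(t)\to 0$. We want to show this forces $X$ to be analytic in the open right half plane and to have no pole on the imaginary axis. Analyticity in $\re s>0$ is immediate, since $x$ is bounded. On the imaginary axis this is the delicate point and the main obstacle: boundedness of $x$ only gives $|X(s)| \le C/\re s$, and a Laplace transform of a function tending to $0$ cannot have a pole of order $m\ge1$ at a point $\mathrm i\omega$. To see this, I would note that $(s-\mathrm i\omega)X(s)\to 0$ as $s\to \mathrm i\omega$ along the horizontal ray $s=\mathrm i\omega+\varepsilon$. This is an Abelian-type argument: $\varepsilon\int_0^\infty e^{-\varepsilon t}e^{-\mathrm i\omega t}x(t)\,\text dt \to 0$ because $x(t)\to0$. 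Such a limit rules out any pole at $\mathrm i\omega$, since a pole would make the product tend to a nonzero limit or diverge. This settles the case $\re\lambda_0=0$, $\lambda_0\ne0$; the case $\re\lambda_0>0$ is simpler, as a pole there contradicts analyticity.

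Finally, if $\lambda_0=0$, then $\chi(0)=\det(-A)=0$, so $A$ is singular. Pick $v\ne0$ with $Av=0$; the constant function $x(t)\equiv v$ is then a solution, since Caputo derivatives of constants vanish, and it does not tend to $0$. This completes the contrapositive. As the paper notes, the argument mirrors \cite[Theorem 1]{BK2018} for $d=2$ and uses nothing specific to the dimension.
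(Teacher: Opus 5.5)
Your proof is correct and follows the route the paper relies on. The paper gives no argument of its own beyond citing the Laplace-transform proof of \cite[Theorem~1]{BK2018} for $d=2$ and asserting that it carries over verbatim. You carry it out for general $d$ and make the delicate cases explicit: poles on the imaginary axis are excluded by the Abelian limit $\varepsilon X(\mathrm i\omega+\varepsilon)\to 0$, and $\lambda_0=0$ is handled by a constant solution $v\in\ker A$.

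The one step that deserves an extra line is the ``genuine pole'' claim. It follows at once from $\det\bigl((\diag(s^\alpha)-A)^{-1}\bigr)=1/\chi(s)$ being unbounded near $\lambda_0$, so the inverse cannot extend analytically to $\lambda_0$.
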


In principle, these two statements reduce our problem to the task of finding all zeros of the characteristic
function. In practice, however, this is a notoriously difficult problem for which no truly efficient algorithms are
known. 

\begin{remark}
	Using arguments from complex analysis, it is easy to see that a characteristic function of a 
	differential equation system of the type \eqref{eq:lin-system} can have at most finitely many zeros,
	but it is not immediately obvious how many zeros it actually has.
\end{remark}

\section{The Basic Approach for Rational Quotients $\alpha_k / \alpha_\ell$}
\label{sec:rational}

In the initial step, we will impose a substantial restriction on the admitted values of $\alpha_1, \alpha_2, 
\ldots, \alpha_d \in (0,1]$. To formulate this condition, we set 
\[
	\tilde \alpha := \max_{k=1, 2 \ldots, d} \alpha _k.
\]
Using this notation, we shall for the moment assume that all quotients
$\alpha_k / \tilde \alpha$ are rational and defer the discussion of the remaining cases until 
Section \ref{sec:alpha-irrational}. Note that this assumption is equivalent to demanding that
all quotients $\alpha_k / \alpha_\ell$ ($k, \ell = 1, 2, \ldots, d$) be rational. Clearly, 
the condition that $\alpha_k \in \mathbb Q$ for all $k$ is sufficient but not necessary for this
assumption to be fulfilled.

To prove the asymptotic stability of the system \eqref{eq:lin-system} in this case, remember that we have to show that 
all zeros of its characteristic function $\chi$ are located in the open left half of $\mathbb C$. By definition
of $\chi$, elementary results from linear algebra imply that $\lambda \in \mathbb C$ is a zero of $\chi$ 
if and only if the linear equation system
\begin{equation}
	\label{eq:les1}
	\left( \diag(\lambda^\alpha) - A \right) v = 0
\end{equation}
possesses a nontrivial solution $v \in \mathbb R^d \setminus \{ 0 \}$.

Let us denote the $j$-th coordinate unit vector of $\mathbb R^d$ 
(in the form of a column vector) by $e_j$. Then, setting
\begin{equation}
	\label{eq:def-a}
	\alpha_0 := 0, \qquad
	A_j := e_j \cdot e_j^{\text T} \quad (j = 1, 2, \ldots, d)
	\qquad \text{ and } \qquad
	A_0 := -A,
\end{equation}
we can rewrite the system \eqref{eq:les1} in the form
\begin{equation}
	\label{eq:les2}
	\left(\sum_{j=0}^d A_j \lambda^{\alpha_j} \right) v = 0.
\end{equation}
By our assumption on the $\alpha_k$, we have for each $k = 1, 2, \ldots, d$ that
\begin{equation}
	\label{eq:alphatilde}
	\alpha_k = \tilde \alpha \cdot \frac{r_k}{s_k}
\end{equation}
with certain positive integers $r_k$ and $s_k$ where, by definition of $\tilde \alpha$,
$r_k \le s_k$ for all $k$. Without loss of generality, we may assume that $\gcd(r_k, s_k) = 1$
for all $k$. Then, setting $\sigma$ to be the least common multiple 
of $s_1, s_2, \ldots, s_d$, we obtain that
\[
	\sigma \in \mathbb N 
	\qquad \text{and} \qquad
	\sigma \ge 2
\]
(because otherwise all $s_k$ would be equal to $1$, and hence all $r_k = 1$, which would imply 
$\alpha_k = \tilde \alpha$ for all $k$, so we would be in the commensurate case that we have 
excluded from our discussion, see Remark \ref{rem:incommens})
and
\[
	\alpha_k = \tilde \alpha \cdot \frac{q_k}{\sigma}
	\quad (k = 1, 2, \ldots, d)
	\qquad \text{with } \quad
	q_k = r_k \frac \sigma {s_k} \in \{ 1, 2, \ldots, \sigma \}.
\]
We now introduce the substitution $\mu := \lambda^{\tilde \alpha / \sigma}$ in eq.~\eqref{eq:les2} and obtain
\begin{equation}
	\label{eq:les3}
	p(\mu) v = 0
	\qquad \text{where} \qquad
	p(\mu) = \sum_{j=0}^d A_j \mu^{q_j}.
\end{equation}
Here, $q_0 = 0$ and $q_j \in \{ 1, 2, \ldots, \sigma \}$ for $j=1,2,\ldots,d$, which indicates that the function $p$
is a polynomial. More precisely, if $\tilde k$ is chosen such that
$\tilde \alpha = \alpha_{\tilde k}$, then $r_{\tilde k} = s_{\tilde k} = 1$ because of \eqref{eq:alphatilde},
and therefore $q_{\tilde k} = \sigma$, so we see that the degree of the polynomial $p$
is equal to $\sigma$ and hence at least 2.

We summarize the intermediate result obtained by this construction:

\begin{theorem}
	\label{thm:reformulation}
	The following three statements are equivalent:
	\begin{enumerate}
	\item[(a)] The differential equation system \eqref{eq:lin-system} is not asymptotically stable.
	\item[(b)] The characteristic function $\chi$ of the system \eqref{eq:lin-system} has a
		zero $\lambda$ with $| \arg(\lambda) | \le \pi/2$.
	\item[(c)] There exists some $\mu \in \mathbb C$ with $| \arg (\mu) | \le \pi \tilde \alpha / (2 \sigma)$
		for which the linear equation system \eqref{eq:les3} has a nontrivial solution $v \ne 0$.
	\end{enumerate}
\end{theorem}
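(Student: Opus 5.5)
The plan is to prove the chain (a)$\Leftrightarrow$(b) and (b)$\Leftrightarrow$(c) separately.

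\emph{(a)$\Leftrightarrow$(b).} This is the contrapositive of Theorems \ref{thm:char-zeros-1} and \ref{thm:char-zeros-2} taken together. Those results say the system is asymptotically stable if and only if every zero of $\chi$ lies in the open left half-plane. Negating this, the system fails to be asymptotically stable if and only if $\chi$ has a zero $\lambda$ with $\re \lambda \ge 0$. By the principal-branch convention, $\re\lambda\ge 0$ means either $\lambda=0$ or $|\arg\lambda|\le\pi/2$. So I would adopt the paper's convention that $\arg 0$ is admissible, i.e.\ $\lambda=0$ is counted as satisfying $|\arg(\lambda)|\le \pi/2$, and with that reading the equivalence holds.

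\emph{(b)$\Leftrightarrow$(c).} Here I will use the substitution $\mu=\lambda^{\tilde\alpha/\sigma}$, and the key point is that it is a bijection between the right sectors. Write $\lambda=|\lambda|e^{\mathrm i\theta}$ with $\theta\in(-\pi,\pi]$. Then $\mu=|\lambda|^{\tilde\alpha/\sigma}e^{\mathrm i\theta\tilde\alpha/\sigma}$. Since $0<\tilde\alpha/\sigma<1$, the argument $\theta\tilde\alpha/\sigma$ again lies in $(-\pi,\pi]$ and is therefore the principal argument of $\mu$. Hence $\arg\mu = (\tilde\alpha/\sigma)\arg\lambda$. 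This shows that $\lambda\mapsto\mu$ maps the closed sector $\{|\arg\lambda|\le\pi/2\}\cup\{0\}$ bijectively onto $\{|\arg\mu|\le \pi\tilde\alpha/(2\sigma)\}\cup\{0\}$, with inverse $\mu\mapsto\mu^{\sigma/\tilde\alpha}$, where this inverse is defined via modulus and argument in the same way.

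Next I check that powers compose correctly on this sector. For each $k$ we have $\alpha_k = q_k\tilde\alpha/\sigma$, and I claim
\[
	\mu^{q_k}=|\lambda|^{q_k\tilde\alpha/\sigma}e^{\mathrm i q_k\theta\tilde\alpha/\sigma}=\lambda^{\alpha_k}.
\]
The reason is that $q_k\le\sigma$ and $\tilde\alpha\le1$ give $|q_k\theta\tilde\alpha/\sigma|\le|\theta|\le\pi$, so the principal power $\lambda^{\alpha_k}$ carries exactly this argument. The boundary case $\theta=\pi$ does not arise because $|\theta|\le\pi/2$ on the sector. Consequently, for $\lambda$ in the sector and the corresponding $\mu$, the matrices satisfy $p(\mu)=\sum_j A_j\lambda^{\alpha_j}=\diag(\lambda^\alpha)-A$. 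A nontrivial solution of \eqref{eq:les3} is then the same thing as a nontrivial solution of \eqref{eq:les1}, which is equivalent to $\chi(\lambda)=0$. Note that $v$ must be allowed to be complex here. Running the bijection in both directions gives (b)$\Leftrightarrow$(c).

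The main obstacle is the branch bookkeeping: verifying that the identity $(\lambda^{\tilde\alpha/\sigma})^{q_k}=\lambda^{\alpha_k}$ holds on the closed sector, and that the image sector is exactly $|\arg\mu|\le\pi\tilde\alpha/(2\sigma)$ and nothing more. This identity can fail for general $\lambda$, so the restriction to the sector is essential. The degenerate point $\lambda=\mu=0$ must be handled consistently on both sides of the equivalence.
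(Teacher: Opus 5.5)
Your proposal is correct and follows the paper's own route. You get (a)$\Leftrightarrow$(b) from Theorems \ref{thm:char-zeros-1} and \ref{thm:char-zeros-2}, and (b)$\Leftrightarrow$(c) from the substitution $\mu=\lambda^{\tilde\alpha/\sigma}$, while making explicit the sector bijection, the convention for $\lambda=\mu=0$, and the need for complex $v$, all of which the paper leaves implicit. One small correction: because $q_k$ is a positive integer, $(\lambda^{\tilde\alpha/\sigma})^{q_k}=\lambda^{\alpha_k}$ in fact holds for every $\lambda\in\mathbb C$ under the paper's definition $z^\beta=|z|^\beta\exp(\mathrm i\beta\arg z)$, so the sector restriction is needed only in the converse direction, to guarantee that a given $\mu$ actually arises as $\lambda^{\tilde\alpha/\sigma}$.
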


\begin{proof}
	The equivalence of statements (a) and (b) follows from Theorems \ref{thm:char-zeros-1} and
	\ref{thm:char-zeros-2}. 
	
	The fact that (b) and (c) are equivalent to each other is a consequence of our considerations above.
	\qed
\end{proof}

Based on Theorem \ref{thm:reformulation}, we can therefore write down the algorithm
that can be used to check the asymptotic stability of the linear differential equation system~\eqref{eq:lin-system}
in an informal way (a more formal description will be given in Algorithm \ref{alg:1} below):

\begin{enumerate}
\item Compute the matrices $A_j$ and the exponents $q_j \in \mathbb N$ ($j = 0, 1, \ldots, d$) as described above,
	thus determining the polynomial $p$.
\item Determine all $\mu \in \mathbb C$ for which the system \eqref{eq:les3} has a nontrivial solution $v \ne 0$.
\item For each $\mu$ found in Step 2, calculate $\lambda = \mu^{\sigma/\tilde \alpha}$. If any of those $\lambda$ values 
	satisfies $|\arg(\lambda)| \le \pi/2$, the system is not asymptotically stable; otherwise it is asymptotically stable.
\end{enumerate}

Step 1 of this algorithm is very easy; in particular, by eq.~\eqref{eq:def-a}, the matrices $A_j$ are
straightforward to compute: $A_0$ is just the negative of the given coefficient matrix $A$ of the system 
under consideration, and $A_j$ ($j=1,2,\ldots,d$) has a 1 in row $j$ 
and column $j$ and zeros everywhere else. Thus, these latter matrices are also extremely sparse. Step 3 is also 
completely unproblematic.

The crucial part in this algorithm is therefore Step 2. Taking into account that $p$ is a known polynomial,
we can see that---in the language of linear algebra---this step can be interpreted as solving a so-called 
\emph{nonlinear} (or, more precisely, \emph{polynomial}) \emph{eigenvalue problem} \cite{GT2017,Voss2014}.
But this is a process for which well established generally applicable methods exist, see, e.g., \cite[Section 6]{GT2017} or 
\cite{GvBMM2014}. 

For our purposes, the so-called \emph{companion form approach} \cite[Section 2]{MMMM2006} 
turns out to be particularly useful. To this end, we introduce the notation $I_m$ 
for the $(m \times m)$ identity matrix and rewrite the polynomial $p$ from eq.~\eqref{eq:les3} 
(which, as seen above, is of degree $\sigma \ge 2$) in the canonical form
\begin{equation}
	\label{eq:pol-canonical}
	p(\mu) = \sum_{j=0}^\sigma B_j \mu^j.
\end{equation}
A comparison of the coefficients between the representations for $p$ given in eqs.~\eqref{eq:les3} 
and~\eqref{eq:pol-canonical}, respectively,
yields the following relations between the $A_j$ and the $B_j$:
\begin{align*}
	B_0 &= A_0, \\
	B_j &= \sum_{k \in \{1, 2, \ldots, d : q_k = j \} } A_k	\qquad (j = 1, 2, \ldots, \sigma).
\end{align*}

\begin{remark}
	\label{rem:nofullrank}
	It is an immediate consequence of the construction of the matrices $A_j$ and $B_j$ that
	\[
		\sum_{j=1}^d A_j = I_d
		\qquad \text{ and } \qquad
		\sum_{j=1}^\sigma B_j = I_d .
	\]
	Also, neither of the matrices $A_1, A_2, \ldots, A_d$ and $B_1, B_2, \ldots, B_\sigma$ has full rank.
\end{remark}

\begin{remark}
	\label{rem:inf-ev}
	Remark \ref{rem:nofullrank} in particular states that the matrix $B_\sigma$ does not 
	have full rank. According to eq.~\eqref{eq:pol-canonical}, this matrix is the leading coefficient 
	of the polynomial $p$. Therefore, as indicated in \cite[Section 2]{MMMM2006}, the polynomial 
	eigenvalue problem \eqref{eq:les3} has at least one eigenvalue at $\infty$. 
	(For the interpretation of such eigenvalues, see \cite[Definition~2.3]{MMMM2006}.)
	However, these infinite eigenvalues will not cause any problems in our development.
\end{remark}

The next step is to construct the block matrices
\[
	X = \begin{pmatrix} 
			B_\sigma & 0 \\
			0 & I_{(\sigma-1)d}
		\end{pmatrix}
\]
and
\[
	Y = \begin{pmatrix}
			-B_{\sigma-1} & -B_{\sigma-2} & \cdots & -B_0 \\
			I_d & 0 & \cdots & 0 \\
			\vdots & \ddots & \ddots & \vdots \\
			0 & \cdots & I_d & 0
		\end{pmatrix} .
\]
It is clear that the $B_j$ are of size $d \times d$ whereas $X$ and $Y$ have the size $(\sigma d) \times (\sigma d)$.
All these matrices are very easy to compute explicitly, see Algorithm \ref{alg:1}. Moreover, both $X$ and $Y$
are very sparse. The decisive property of this approach can be expressed in the following way \cite[p.~973]{MMMM2006}:

\begin{theorem}
	\label{thm:glevp}
	Each finite eigenvalue $\mu$ of the polynomial eigenvalue problem~\eqref{eq:les3} is
	also an eigenvalue of the generalized linear eigenvalue problem $\mu X  v = Y v$
	with the same multiplicity, and vice versa.
\end{theorem}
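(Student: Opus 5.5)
The plan is to prove that the $(\sigma d)\times(\sigma d)$ pencil $\mu X - Y$ and the $d\times d$ polynomial $p(\mu)$ are related by unimodular matrix polynomials (matrix polynomials with constant nonzero determinant). Both directions of the statement, including multiplicities, will then follow from a determinant identity together with the equivalence of the Smith forms.

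First, I would write out the block structure of $\mu X - Y$. The first block row is
\[
(\mu B_\sigma + B_{\sigma-1},\ B_{\sigma-2},\ \ldots,\ B_0),
\]
and rows $2,\ldots,\sigma$ have the form $(\ldots,-I_d,\ \mu I_d,\ldots)$, i.e.\ $-I_d$ on the subdiagonal and $\mu I_d$ on the diagonal. For an eigenvector I would use the ansatz $w=(\mu^{\sigma-1}v,\mu^{\sigma-2}v,\ldots,\mu v,v)^{\text T}$. Rows $2,\ldots,\sigma$ of $(\mu X-Y)w=0$ then hold identically. The first row reduces to $\sum_{j=0}^{\sigma}B_j\mu^j v = p(\mu)v$.

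Conversely, suppose $(\mu X - Y)w=0$ with $w=(w_1,\ldots,w_\sigma)\neq 0$. Rows $2,\ldots,\sigma$ force $w_{k}=\mu w_{k+1}$, so $w_k=\mu^{\sigma-k}w_\sigma$. Hence $w_\sigma\neq0$, and the first row gives $p(\mu)w_\sigma=0$. So for finite $\mu$ the eigenvalues coincide, and this map is a bijection between eigenvectors, which settles geometric multiplicity.

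For algebraic multiplicity, I would build explicit matrix polynomials $E(\mu)$ and $F(\mu)$ with
\[
E(\mu)(\mu X - Y)F(\mu)=\begin{pmatrix} p(\mu) & 0\\ 0 & I_{(\sigma-1)d}\end{pmatrix},
\]
where $\det E$ and $\det F$ are nonzero constants. Here $F$ is block lower triangular, with identity blocks on the diagonal and the powers $\mu^{k}I_d$ in the first block column. Multiplying by $F$ clears the $\mu I_d$ terms by column operations. $E$ is block upper triangular and is built from the Horner shifts $P_k(\mu)=\sum_{j\ge k}B_j\mu^{j-k}$; multiplying by $E$ eliminates the remaining entries of the first block row.

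From this identity, $\det(\mu X-Y)=c\,\det p(\mu)$ with a constant $c\neq 0$. So the finite eigenvalues agree with their algebraic multiplicities, which are the orders of the zeros of the two determinants. Moreover, the invariant factors agree, so the partial multiplicities agree as well. The case $\det p\equiv 0$ (a singular problem) would have to be treated by the same equivalence; it is not needed here.

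The main obstacle is getting the explicit form of $E$ so that the product is exactly block diagonal, with the signs matching the ordering $-B_{\sigma-1},\ldots,-B_0$ in $Y$. I would first check the case $\sigma=2$ by hand and then generalize the pattern by induction on $\sigma$. Alternatively, I would cite the strong linearization result of \cite[Section 2]{MMMM2006}, which the paper already invokes.
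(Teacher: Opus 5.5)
Your route is the paper's route. The paper gives no argument of its own beyond the citation \cite[p.~973]{MMMM2006}, where the first companion pencil is shown to be a strong linearization, i.e.\ unimodularly equivalent to $\diag(p(\mu), I_{(\sigma-1)d})$. You propose to prove exactly this equivalence, with that citation as fallback. Several of your steps are correct:
\begin{itemize}
\item the block form of $\mu X - Y$;
\item the eigenvector correspondence $v \mapsto (\mu^{\sigma-1}v,\ldots,\mu v,v)^{\text T}$;
\item the passage from unimodular equivalence to equal determinants and equal invariant factors.
\end{itemize}
The singular case is indeed excluded, because $\det p(\mu) = \det(\diag(\mu^{q_1},\ldots,\mu^{q_d}) - A) = \mu^N + (\text{lower order terms})$.

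The step that fails as written is the shape of $F$. With the paper's block ordering, no block lower triangular $F$ with identity diagonal blocks can be paired with a block upper triangular $E$. The last block row of $\mu X - Y$ is $(0,\ldots,0,-I_d,\mu I_d)$, so the $(\sigma,\sigma)$ block of $(\mu X - Y)F$ is $-F_{\sigma-1,\sigma} + \mu F_{\sigma\sigma} = \mu I_d$. The last block row of $E$ is $(0,\ldots,0,E_{\sigma\sigma})$, so the $(\sigma,\sigma)$ block of $E(\mu X - Y)F$ is $\mu E_{\sigma\sigma}(\mu)$. This vanishes at $\mu = 0$ and therefore can never equal $I_d$. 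Your $F$ is the one that fits the reversed ordering, with eigenvector $(v,\mu v,\ldots,\mu^{\sigma-1}v)^{\text T}$.

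Conjugating by the block reversal fixes this. Let $F_1(\mu)$ be block upper triangular with blocks $\mu^{j-i}I_d$ in position $(i,j)$, $j \ge i$. Then $(\mu X - Y)F_1$ has first block row $(P_{\sigma-1},\ldots,P_1,p)$, $-I_d$ on the block subdiagonal, and zeros elsewhere. Now define:
\begin{itemize}
\item $E_1(\mu)$: the identity with its first block row replaced by $(I_d, P_{\sigma-1},\ldots,P_1)$;
\item $\Pi$: the block cyclic permutation moving the last block column to the front;
\item $S = \diag(I_d,-I_{(\sigma-1)d})$.
\end{itemize}
Then
\[
S\,E_1(\mu)\,(\mu X - Y)\,F_1(\mu)\,\Pi = \diag\bigl(p(\mu), I_{(\sigma-1)d}\bigr),
\]
with $\det(SE_1) = \pm 1$ and $\det(F_1\Pi) = \pm 1$. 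So your block upper triangular $E$ built from Horner shifts is right. The matrix $F = F_1\Pi$ does have $(\mu^{\sigma-1}I_d,\ldots,\mu I_d,I_d)^{\text T}$ as its first block column, as you anticipated, but it is not lower triangular. With this correction your proof is complete and self-contained.
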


Since $B_\sigma$ is rank deficient (see Remark \ref{rem:nofullrank}), the matrix $X$ does not have full 
rank either, and therefore, like the polynomial eigenvalue problem \eqref{eq:les3}, 
the generalized linear eigenvalue problem mentioned in Theorem \ref{thm:glevp}
also has at least one eigenvalue at $\infty$. Standard numerical methods for such eigenvalue problems
like the generalized Schur decomposition (also known as the QZ algorithm) \cite[Section 7.7]{GvL1996}
that is used in MATLAB's "eig" function \cite{MATLABeig}, can handle such cases and reliably detect
all eigenvalues, no matter whether finite or infinite, with their corresponding multiplicities.
In view of Theorem \ref{thm:glevp}, we could in principle use such an algorithm and
simply ignore the infinite eigenvalues because they are not
relevant for our stability investigations.

However, this ``brute force'' approach is not very efficient. Indeed, it is preferable to use an algorithm
that is designed to compute only the finite eigenvalues, thus avoiding the (potentially computationally
expensive) calculation of many eigenvalues that will eventually be discarded anyway.
To this end, we note that it follows from the considerations of \cite{DHTT2024} 
that the number of zeros of the characteristic polynomial $\chi$ (counting multiplicities), and hence the 
number of finite eigenvalues of the eigenvalue problem \eqref{eq:les3}, 
is equal to $N := \sum_{k=1}^d q_k$. Since $q_k \le \sigma$ for all $k$ with strict
inequality holding for at least one $k$, it is clear that $N < \sigma d$. 
(Note that, in view of the sizes of the matrices $X$ and $Y$, $\sigma d$ is the total number
of all (finite and infinite) eigenvalues of the generalized eigenvalue problem from Theorem \ref{thm:glevp}.)
Depending on the particular values of the $r_k$ and $s_k$, and hence $q_k$,
the value of $N$ may even be significantly smaller than $\sigma d$; see Table \ref{tab:results} in 
Section \ref{sec:examples} for some examples.
In any case, we can therefore use a variant of the Krylov-Schur algorithm from \cite{S2002a,S2002b}
as implemented, e.g., in MATLAB's "eigs" function \cite{MATLABeigs}, and ask this
function to compute only the finite eigenvalues, i.e.\ the $N = \sum_{k=1}^d q_k$ eigenvalues closest to zero.
This eigenvalue computation approach has the additional advantage that it is able to exploit the sparsity 
of the matrices $X$ and $Y$, thus speeding up the calculations and saving memory (because not all 
entries of the matrices $X$ and $Y$ need to be stored) even more.

Putting together all these individual items, we can now write down the complete algorithm. In the MATLAB 
or GNU/Octave programming language, it takes the form shown in Algorithm \ref{alg:1}.
The function returns a Boolean variable "stbl"
that states whether the given system is asymptotically stable. Optionally, it can in addition also return
the variable "cfzeros", an array of complex numbers with the zeros of 
the characteristic function $\chi$, and the variable "mu", another array of complex numbers 
that comprises all finite eigenvalues of the problem \eqref{eq:les3}.

\begin{algorithm}[htb]
\caption{\label{alg:1}MATLAB or GNU/Octave code for checking the asymptotic stability of 
	a homogeneous linear incommensurate fractional
	differential equation system with constant coefficients as given in \eqref{eq:lin-system}.}
\begin{lstlisting}[style=Matlab-editor]
function [stbl, cfzeros, mu] = ...
		stability_check(alphatilde, r, s, A, epsilon)
  if (nargin == 4)
    epsilon = 0.0;
  end
	
  d = length(r);
  sigma = 1;
  for s0 = s
    sigma = lcm(sigma, s0); 
  end
  q = sigma * r ./ s;

  B = zeros(d, d, sigma);
  for k = 1 : d
    B(k, k, q(k)) = -1;
  end

  X = sparse(d+1 : sigma*d, d+1 : sigma*d, 1, ...
	  		sigma*d, sigma*d, (sigma+d-1)*d);
  X(1:d, 1:d) = -sparse(B(:, :, sigma));

  Y = sparse(d+1 : sigma*d, 1 : (sigma-1)*d, 1, ...
  			sigma*d, sigma*d, (sigma+d)*d);
  for k = 1 : sigma-1
    Y(1 : d, 1 + (sigma-k-1)*d : (sigma-k)*d) ...
    			= sparse(B(:, :,k));
  end
  Y(1 : d, 1 + (sigma-1)*d : sigma*d) = sparse(A);
        
  ats = alphatilde / double(sigma);
  mu = eigs(Y, X, sum(q), 0.0);
  cfzeros = mu(abs(angle(mu)) <= pi * ats) .^ (1.0 / ats);
  stbl = (min(abs(angle(mu))) > (pi/2 + epsilon) * ats) ...
  		& (det(A) ~= 0);
end
\end{lstlisting}
\end{algorithm}

\begin{remark}
	\label{rem:categories}
	A careful inspection of our derivation above reveals that each eigenvalue $\mu$ of the 
	generalized linear eigenvalue problem $\mu X v = Y v$ falls into exactly one of the following 
	five categories:
	\begin{enumerate}
	\item $\mu = \infty$:  These eigenvalues arise because the dimension of the matrices $X$ and $Y$
		is larger than $N$. They do not correspond to any zeros of the characteristic function $\chi$
		of the fractional differential equation system and therefore, as mentioned above, 
		do not need to be computed in the first place. The call to MATLAB's "eigs"
		function takes care of this.
	\item $\mu = 0$: Because of the relation $\lambda = \mu^{\sigma / \tilde \alpha}$, the characteristic
		function $\chi$ has a zero at $\lambda = 0$ if such an eigenvalue exists. Since
		$\chi(0) = \det A$, this occurs if and only if $A$ is singular. In this case, we therefore have
		a zero of $\chi$ outside of the open left half of the complex plane, and the system is not
		asymptotically stable. The assignment in the last line of the function's program code makes 
		sure that this case is handled correctly.
	\item $0 < | \mu | < \infty$ and $\pi \tilde \alpha / \sigma < |\arg (\mu)|$: Our goal is to find the zeros 
		$\lambda \in \mathbb C$ of the characteristic function $\chi$. For each of these zeros, we have
		$| \arg(\lambda)| \le \pi$, and so the associated $\mu = \lambda^{\tilde \alpha / \sigma}$
		satisfies $|\arg(\mu)| \le \pi \tilde \alpha / \sigma$. Therefore, the eigenvalues $\mu$ with 
		$\pi \tilde \alpha / \sigma < |\arg (\mu)|$ do not correspond to any zeros of $\chi$ (i.e.,
		the number of zeros of $\chi$ is smaller than the number of eigenvalues, where we count both the
		zeros and the eigenvalues according to their respective multiplicities). Thus, such eigenvalues
		(if they exist) can also be discarded for the stability analysis. This is reflected in the
		assignment to the variable "cfzeros".
	\item $0 < | \mu | < \infty$ and $|\arg (\mu)| \le \pi \tilde \alpha / \sigma$: 
		These are the eigenvalues that are really associated to zeros of $\chi$. These eigenvalues, if they exist,
		can be further subclassified as follows:
		\begin{enumerate}
		\item $0 < | \mu | < \infty$ and $|\arg (\mu)| \le \pi \tilde \alpha / (2 \sigma)$: 
			If such an eigenvalue $\mu$ exists then the corresponding zero 
			$\lambda = \mu^{\sigma / \tilde \alpha}$ of the function $\chi$
			satisfies $| \arg(\lambda)| \le \pi/2$, i.e.\ it is located in the closed right half of the 
			complex plane, and the differential equation system is not asymptotically stable.
		\item $0 < | \mu | < \infty$ and $\pi \tilde \alpha / (2 \sigma) < |\arg (\mu)| \le \pi \tilde \alpha / \sigma$: 
			By an analog argument, this final class of eigenvalues belongs to the zeros of $\chi$ in the open left
			half of the complex plane.
		\end{enumerate}
	\end{enumerate}
	In summary, we obtain asymptotic stability of the differential equation system if and only if all finite
	eigenvalues $\mu$ fall into the one of the categories 3 or 4(b).
	
	Note also that none of the corresponding eigenvectors plays any role in our considerations.
	Therefore, it is not necessary to compute the eigenvectors.
\end{remark}

\begin{remark}
	The variable "B(:, :, k)" in the code represents  
	the matrix $-B_k$, not $B_k$.
\end{remark}

\begin{remark}
	Apart from the values $\tilde \alpha$, $r$, $s$ and $A$ introduced above,
	the function's parameter list also contains an optional parameter $\epsilon \ge 0$ with default value $0$.
	This parameter allows to introduce a ``safety margin'' into the calculation to compensate 
	for the fact that the eigenvalues of the generalized eigenvalue problem of Theorem~\ref{thm:glevp}
	can only be computed numerically and thus may be slightly inaccurate.
	The function assigns the value "true" to its return variable "stbl"
	only if $\arg(\lambda) > \pi/2 + \epsilon$ for all zeros $\lambda$ of the characteristic function $\chi$.
	In this way, the user can reduce the risk of obtaining a false positive result for the stability check
	when the system is very close to the boundary between asymptotic stability and instability.
\end{remark}

\begin{remark}
	\label{rem:algconditions}
	The code in the form provided in Algorithm \ref{alg:1} assumes, but does not verify, 
	that the parameters passed in the function call fulfil the following conditions:
	\begin{enumerate}
	\item $\tilde \alpha \in (0, 1]$,
	\item $r$ and $s$ are vectors of positive integers, both of which have the same length
		and satisfy $r_k \le s_k$ and $\gcd(r_k, s_k) = 1$ for all $k$,
	\item $\min_k \{ r_k / s_k \} < \max_k \{ r_k / s_k \} = 1$,
	\item $A$ is a square real or complex matrix with as many columns and rows as 
		the length of the vectors $r$ and $s$,
	\item $\epsilon \ge 0$.
	\end{enumerate}
	A complete version of the code that also checks whether all the actual parameters in the
	function call's parameter list indeed satisfy these conditions
	is available in the Zenodo repository \cite{DH:zenodo-stability}.
\end{remark}

\begin{remark}
	The five conditions mentioned in Remark \ref{rem:algconditions} have different backgrounds.
	Specifically:
	\begin{itemize}
	\item Condition 1 and the restriction that $r_k \le s_k$ in condition 2 together guarantee that 
		all differential equations in the given system have orders in the range $(0,1]$ and therefore that
		the system becomes well posed if each individual equation is associated with a single initial
		condition, thus avoiding any potential analytical complications.
	\item Condition 4 and the requirement that $r$ and $s$ be of the same length in condition 2 are required 
		to make sure that the problem is properly posed. If one of these assumptions is not true 
		then the given data do not match and the algorithm will crash with an error message.
	\item The left inequality in condition 3 asserts that the system is incommensurate, i.e.\ that
		not all equations have the same order. If this is not true then the algorithm still
		works, but much simpler approaches are available \cite{DLL2007}.
	\item The remaining parts of conditions 2 and 3 make sure that the dimension of the 
		linear eigenvalue problem which our algorithm constructs is not larger than necessary,
		thus keeping the computational complexity at a minimum.
	\item If condition 5 is violated then it may happen that the algorithm produces a false positive result, 
		i.e.\ it claims that the system is asymptotically stable although it actually is unstable.
	\end{itemize}	
\end{remark}

\section{Comparison with Existing Approaches}
\label{sec:comparison}

We are currently aware of only one other approach for solving the problem addressed in 
Section~\ref{sec:rational}, it being the method proposed in \cite{DHTT2024} under the 
additional restriction that $\tilde \alpha \in \mathbb Q$. 
Similar to the approach of our method presented here, the algorithm described in \cite[Section~3]{DHTT2024}
constructs a sparse matrix $B$ (not to be confused with our matrices $B_j$ above)
from the given data $A$, $r_k$ and $s_k$ ($k=1, 2, \ldots, d$)
and exploits the fact that certain powers of the eigenvalues of this matrix $B$ coincide with the zeros of the
characteristic function $\chi$, see \cite[Theorem~3.1]{DHTT2024}. 

The matrix $B$ is of size $N \times N$
where, as above, $N = \sum_{j=1}^d q_j$ with $q_j$ having the same meaning as in Section \ref{sec:rational}.
Therefore, the eigenvalue problem that needs to be solved in the approach of \cite{DHTT2024} is of a lower
dimension than the $(\sigma\cdot d)$-dimensional generalized eigenvalue problem $\mu X  v = Y v$ 
from Theorem \ref{thm:glevp} that we need to solve in the new approach described here. This can be interpreted
essentially as being due to the fact that the approach of \cite{DHTT2024} does not introduce the eigenvalues at
infinity that we implicitly generate (and later eliminate again) in our approach. 
From this perspective, the method of \cite{DHTT2024} seems to have a small advantage.

However, the construction of the matrices $X$ and $Y$ in our derivation here is extremely simple and can be achieved with 
a very low computational effort whereas, on the other hand, the construction of the matrix $B$ according to the
process described in \cite[pp.~5--6]{DHTT2024} is very involved, difficult to implement and 
requires a much larger number of arithmetic operations. 
Therefore, it is computationally expensive, and one has to expect a more substantial propagation of rounding errors.
For these reasons, we believe that the new method which we have presented in Section \ref{sec:rational}
is the more attractive alternative.

We have executed our code in MATLAB R2024b as well as in GNU Octave 8.3.0.
For all examples discussed in \cite{DHTT2024}, the results obtained there 
with the other, more complex algorithm could be reproduced in both platforms.

\section{Extension to Arbitrary $\alpha \in (0,1]^d$}
\label{sec:alpha-irrational}

If we allow the orders $\alpha_1, \alpha_2, \ldots, \alpha_d$ to be arbitrary real numbers
in the interval $(0, 1]$, i.e.\ if we give up the requirement that all quotients $\alpha_k / \alpha_\ell$
be rational, then neither the approach of our Section \ref{sec:rational} here nor the scheme from
\cite[Section 3]{DHTT2024} is generally applicable any more to find the zeros of the characteristic function,
so a different method is required.

To this end, one could for example try to address the nonlinear eigenvalue problem \eqref{eq:les1} directly with the help 
of techniques from numerical linear algebra. However, all algorithms from this field that are designed
for problems with such a structure and that we are aware of (e.g., the method from \cite{vBJM2016}) 
impose smoothness assumptions
with respect to $\lambda$ for the function on left-hand side of eq.~\eqref{eq:les1} which are not satisfied in 
our case. In particular, since $\min_{k=1, 2, \ldots, d} \alpha_k \in (0,1)$, the function $\lambda \mapsto
\diag(\lambda^\alpha) - A$ is continuous in $\mathbb C$ but does not have a first derivative at the origin.

To handle this case, we therefore
take our original differential equation system \eqref{eq:lin-system} (with arbitrary
$\alpha_k \in (0,1]$ for $k=1,2,\ldots, d$) and construct a second system
\begin{equation}
	\label{eq:lin-system2}
	D^{\alpha^*} x(t) = A x(t)
\end{equation}
with the same coefficient matrix $A$ but with rational orders 
$\alpha^* = (\alpha^*_1, \alpha^*_2, \ldots, \alpha^*_d) \in ((0,1] \cap \mathbb Q)^d$.
We denote the characteristic function of the system \eqref{eq:lin-system2} by $\chi^*$.
The idea is that, if $\alpha_k^*$ is sufficiently close to $\alpha_k$ for each $k$, then we may hope
that the zeros of $\chi^*$ are also very close to the zeros of $\chi$, and hence the stability
properties of the system \eqref{eq:lin-system2} will be the same as those of the original system~\eqref{eq:lin-system}.
The following statements justify this approach. In their formulation, we use the notation
\[
	\nu(M) = \begin{cases}
			0 & \text{ if $M$ is singular,}\\
			1 / \| M^{-1} \|_2 & \text{ if $M$ is regular,}
			\end{cases}
\]
for any $(d \times d)$ matrix $M$, where $\| \cdot \|_2$ is the usual spectral norm for $(d \times d)$ matrices,
i.e.\ the natural matrix norm induced by the Euclidean vector norm.

\begin{theorem}
	Let $d \in \mathbb N$, $\alpha \in (0,1]^d$, $\epsilon > 0$ and $A$ be a constant $(d \times d)$ matrix. 
	Then, there exist some $\rho_1, \rho_2 \in \mathbb R$ and $\alpha^* \in ((0,1] \cap \mathbb Q)^d$ 
	with the following properties:
	\begin{enumerate}
	\item $\alpha_k^* \le \alpha_k$ for all $k = 1, 2, \ldots, d$.
	\item $0  < \rho_1 < 1 \le \rho_2$.
	\item All zeros of the characteristic functions $\chi$ and $\chi^*$ are located in the annulus\linebreak
		$R(\rho_1, \rho_2) := \{ z \in \mathbb C : \rho_1 \le |z| \le \rho_2 \}$.
	\item For each $k \in \{ 1, 2, \ldots, d \}$, we have
		$\sup \{ | z^{\alpha_k} - z^{\alpha^*_k} | : z \in R(\rho_1,  \rho_2) \} < \epsilon$.
	\end{enumerate}
\end{theorem}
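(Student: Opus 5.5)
The plan is to fix the annulus first, using only $\alpha$ and $A$, and to choose the rational approximation $\alpha^*$ afterwards. Since $\rho_1, \rho_2$ must work for $\chi^*$ as well, I will only admit approximations satisfying $\alpha_k/2 \le \alpha_k^* \le \alpha_k$, and derive the radii from bounds that hold uniformly for every such $\alpha^*$. Write $\underline\alpha := \min_k \alpha_k > 0$ and $D(\lambda) := \diag(\lambda^{\beta})$, where $\beta$ is either $\alpha$ or an admissible $\alpha^*$.

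\emph{Caveat.} I expect property 3 to require $\det A \neq 0$, which is presumably why $\nu$ was introduced. Indeed $\chi(0) = \det(-A)$, so a singular $A$ gives a zero at the origin, and no $\rho_1 > 0$ can exist. I will therefore assume $A$ is regular, so that $\nu(A) > 0$. The singular case is anyway settled directly by Remark~\ref{rem:categories}, category 2: the system is then not asymptotically stable.

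\emph{Outer radius.} For $|\lambda| \ge 1$, every $\beta_k \ge \underline\alpha/2$, hence $|\lambda^{\beta_k}| = |\lambda|^{\beta_k} \ge |\lambda|^{\underline\alpha/2}$. Thus $D(\lambda)$ is invertible with $\|D(\lambda)^{-1}\|_2 \le |\lambda|^{-\underline\alpha/2}$. Choose
\[
\rho_2 := \max\{1, (2\|A\|_2)^{2/\underline\alpha}\}.
\]
For $|\lambda| > \rho_2$ this gives $\|D(\lambda)^{-1}A\|_2 < 1$, so $D(\lambda) - A = D(\lambda)(I - D(\lambda)^{-1}A)$ is invertible by a Neumann series argument. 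Hence neither $\chi$ nor $\chi^*$ vanishes there.

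\emph{Inner radius.} For $0 < |\lambda| < 1$, every $\beta_k \ge \underline\alpha/2$, hence $\|D(\lambda)\|_2 \le |\lambda|^{\underline\alpha/2}$. Choose
\[
\rho_1 := \min\{1/2, (\nu(A)/2)^{2/\underline\alpha}\}.
\]
For $|\lambda| < \rho_1$ this gives $\|A^{-1}D(\lambda)\|_2 < 1$, so $D(\lambda) - A = -A(I - A^{-1}D(\lambda))$ is invertible; at $\lambda = 0$ this is just $\det A \ne 0$. This settles properties 2 and 3.

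\emph{Approximation (property 4).} On $R(\rho_1,\rho_2)$ write $z^a = \exp(a \log z)$ with the principal logarithm, so $|\log z| \le L := \max\{|\ln\rho_1|, \ln\rho_2\} + \pi$. By the mean value theorem along the segment $[a,b] \subset (0,1]$,
\[
|z^a - z^b| \le |a-b|\, L \sup_{c\in[a,b]} |z^c| \le |a-b|\, L \rho_2 .
\]
Set $\delta := \epsilon/(2L\rho_2)$. For each $k$, pick a rational $\alpha_k^* \in [\max\{\alpha_k/2, \alpha_k - \delta\}, \alpha_k]$, taking $\alpha_k^* = \alpha_k$ when $\alpha_k$ is already rational. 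Then property 1 and $\alpha_k^* \in (0,1]\cap\mathbb Q$ hold, and the supremum in property 4 is at most $\epsilon/2 < \epsilon$.

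\emph{Main obstacle.} The delicate point is ensuring that the annulus is valid for the not-yet-chosen $\chi^*$, i.e.\ avoiding circularity between $\rho_1, \rho_2$ and $\alpha^*$. The a priori constraint $\alpha_k^* \ge \alpha_k/2$ resolves this, since it makes both radius bounds uniform over all admissible $\alpha^*$. The other is the degenerate case $\det A = 0$ described above.
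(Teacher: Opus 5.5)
Your proof is correct. Your caveat is also justified. For singular $A$ one has $\chi(0)=\det(-A)=0$, which the paper itself notes in Remark~\ref{rem:categories} (category 2). So property 3 cannot hold with $\rho_1>0$, and the statement as printed silently requires $\det A\neq 0$.

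The paper does not actually argue anything here: it simply invokes \cite[Proposition 4.2]{DHTT2024}. Your route is therefore genuinely different, a self-contained elementary proof with three ingredients:
\begin{itemize}
\item You fix the annulus \emph{before} choosing $\alpha^*$, by restricting a priori to exponent vectors with $\alpha_k/2\le\beta_k\le\alpha_k$.
\item You use the Neumann-series bounds $\|D(\lambda)^{-1}\|_2\le|\lambda|^{-\underline\alpha/2}$ for $|\lambda|\ge 1$ and $\|D(\lambda)\|_2\le|\lambda|^{\underline\alpha/2}$ for $|\lambda|<1$. These hold uniformly over that whole admissible class, which is exactly what breaks the circularity between the radii and $\chi^*$.
\item You get property 4 from the Lipschitz estimate $|z^a-z^b|\le|a-b|\,L\rho_2$ in the exponent.
\end{itemize}

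What your version buys is explicitness. The quantities $\rho_1$, $\rho_2$ and the tolerance $\delta=\epsilon/(2L\rho_2)$ are computable from $\|A\|_2$, $\nu(A)$ and $\min_k\alpha_k$ alone. Choosing any rational in $[\max\{\alpha_k/2,\alpha_k-\delta\},\alpha_k]$ then gives a direct recipe for part (b) of Step 1. It also brings the hidden nonsingularity hypothesis into the open.

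The citation, by contrast, keeps the paper short. It also ties the notion of an $\epsilon$-rational approximation to the framework of \cite{DHTT2024}, whose Theorem 4.6 the proof of Theorem~\ref{thm:46} relies on.
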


\begin{proof}
	This is an immediate consequence of  \cite[Proposition 4.2]{DHTT2024}.
	\qed
\end{proof}

Following \cite[Definition 4.1]{DHTT2024}, 
a vector $\alpha^*$ with the properties of this Theorem is called an $\epsilon$-rational approximation of $\alpha$
associated with the matrix $A$.
The main result in this context then reads as follows.

\begin{theorem}
	\label{thm:46}
	The system \eqref{eq:lin-system} is asymptotically stable if and only if there exist some $\epsilon > 0$
	and an $\epsilon$-rational approximation $\alpha^*$ of $\alpha$ such that 
	\[
		\min \{ \nu( \diag(z^{\alpha^*}) - A) : \re z = 0 \} \ge \epsilon .
	\]
\end{theorem}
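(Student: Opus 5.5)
The plan is to compare the two matrix functions $M(z) := \diag(z^\alpha) - A$ and $M^*(z) := \diag(z^{\alpha^*}) - A$ through the perturbation inequality for the smallest singular value,
\[
	\nu(M + E) \ge \nu(M) - \| E \|_2 ,
\]
which holds for all $(d \times d)$ matrices $M$ and $E$ (note that $\nu(M)$ is just the smallest singular value of $M$). We take $E(z) = M(z) - M^*(z) = \diag(z^\alpha - z^{\alpha^*})$. This matrix is diagonal, so $\|E(z)\|_2 = \max_k |z^{\alpha_k} - z^{\alpha_k^*}|$, and by property~4 of an $\epsilon$-rational approximation this is $< \epsilon$ on the annulus $R(\rho_1,\rho_2)$. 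Both directions of the equivalence then reduce to transferring a lower bound for $\nu$ on the imaginary axis from one system to the other, combined with Theorems \ref{thm:char-zeros-1} and \ref{thm:char-zeros-2}.

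\emph{``Only if''.} Assume the system is asymptotically stable. By Theorem \ref{thm:char-zeros-2}, $\chi$ has no zeros with $\re z \ge 0$. In particular $A = -M(0)$ is regular, and $M(z)$ is regular for every $z \in \text{i}\mathbb R$. For $|z| \to \infty$ on the axis, every entry $|z^{\alpha_k}| = |z|^{\alpha_k} \to \infty$, so $\nu(M(z)) \ge \min_k |z|^{\alpha_k} - \|A\|_2 \to \infty$. Since $\nu$ is continuous, $m := \inf\{\nu(M(z)) : \re z = 0\} > 0$, and the infimum is attained on a compact piece $\{|z| \le R\}$ of the axis. Next we choose the approximation. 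The map $(z,\beta) \mapsto z^\beta$ is jointly continuous on $\{|z| \le R\} \times [\beta_0, 1]$ with $\beta_0 > 0$, including at $z = 0$, because $|z^\beta| \le |z|^\beta$. Using this together with \cite[Proposition 4.2]{DHTT2024}, we take $\alpha^*$ to be an $\epsilon$-rational approximation with $\epsilon = m/2$ that is additionally so close to $\alpha$ that $\max_k |z^{\alpha_k} - z^{\alpha_k^*}| \le m/2$ on all of $\{|z| \le R\} \cap \text{i}\mathbb R$. The perturbation inequality then gives $\nu(M^*(z)) \ge m/2 = \epsilon$ there. For $|z| > R$ on the axis, the growth estimate for $M^*$ gives the same bound, after enlarging $R$ if necessary.

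\emph{``If''.} The perturbation inequality yields $\nu(M(z)) \ge \epsilon - \sup_{R(\rho_1,\rho_2)} \|E\|_2 > 0$ on the part of the imaginary axis inside the annulus. The supremum is attained on this compact set and is strictly smaller than $\epsilon$, which makes the last inequality strict. By property~3, $\chi$ has no zeros outside the annulus, so $\chi$ has no zeros on the imaginary axis at all. To exclude zeros with $\re z > 0$, I would use a homotopy
\[
	F_t(z) = \det\bigl(M^*(z) + t E(z)\bigr), \qquad t \in [0,1].
\]
The same estimate, with $t\|E\|_2 < \epsilon$, shows that no $F_t$ vanishes on the axis within the annulus. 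Moreover, the zeros of $F_t$ in the closed right half plane stay in a fixed compact set bounded away from $0$ and from $\infty$: near $0$ because $A$ is regular (regularity follows from $\nu(M^*(0)) \ge \epsilon$), and near $\infty$ by diagonal dominance. The functions $F_t$ are analytic in $\re z > 0$ and depend continuously on $t$. Hence, by the argument principle (Hurwitz's theorem) applied to a half-annulus whose boundary is free of zeros, the number of zeros in the right half plane is the same for $t=0$ and $t=1$.

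The main obstacle is exactly this last step. The homotopy transfers the statement ``no zeros in the right half plane'' from $\chi^*$ to $\chi$, but the hypothesis as literally stated only controls the imaginary axis. So I would need the approximating system \eqref{eq:lin-system2} itself to be asymptotically stable, which is what Algorithm \ref{alg:1} checks. I therefore expect to read the theorem, as in \cite[Theorem 4.6]{DHTT2024}, with that requirement implicitly included; in the ``only if'' direction it is automatic, since the same homotopy run backwards from $t=1$ to $t=0$ shows $\chi^*$ has no right-half-plane zeros. With that, Theorem \ref{thm:char-zeros-1} applied to $\chi$ concludes the proof.
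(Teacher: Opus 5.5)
Your suspicion about the ``if'' direction is correct. It is a defect in the statement and in the paper's proof, not only in your argument. Read literally, the ``if'' direction is false. Take $d=2$, $A=\diag(1,-1)$, $\alpha=\alpha^*=(1/2,3/10)$, $\rho_1=1/2$, $\rho_2=1$. The only zero of $\chi=\chi^*$ is $\lambda=1$, so $\alpha^*$ is an $\epsilon$-rational approximation for every $\epsilon>0$. On the imaginary axis, $\nu(\diag(z^{\alpha^*})-A)=\min\{|z^{1/2}-1|,|z^{3/10}+1|\}\ge 1/\sqrt2$, so the condition holds with $\epsilon=1/2$. Yet $\lambda=1$ lies in the right half plane. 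Replacing $3/10$ by $1/\pi$, and $\alpha_2^*$ by a nearby smaller rational, changes nothing. The paper obtains the literal statement only through its last step, the identity $\delta^2_{\alpha^*}(A)=\min\{\nu(\diag(z^{\alpha^*})-A):\re z=0\}$. That identity holds when \eqref{eq:lin-system2} is asymptotically stable. Otherwise $E=0$ gives $\delta^2_{\alpha^*}(A)=0$ while the minimum can be positive; in general only ``$\le$'' holds. So the paper's chain (a)$\Leftrightarrow$(b)$\Leftrightarrow$(c) is valid as formulated with $\delta^2$. Its correct $\nu$-version is statement (b), i.e.\ \cite[Theorem 4.6]{DHTT2024} with asymptotic stability of \eqref{eq:lin-system2} required. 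That is exactly your reading.

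For the corrected statement, the paper simply cites \cite{DHTT2024}, whereas you argue directly via $\nu(M+E)\ge\nu(M)-\|E\|_2$ and a homotopy. Your ``only if'' part works, because there you \emph{choose} $\alpha^*$ and can make $E(z)$ small on the whole relevant part of the axis. Two points to tidy up. First, fix $R$ uniformly for all orders in $[\beta_0,1]^d$ before choosing $\alpha^*$. Second, note that $(1-t)z^{\alpha_k^*}+tz^{\alpha_k}$ still grows on the closed right half plane, because the two arguments differ by less than $\pi/2$.

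In the ``if'' part, $\alpha^*$ is given, and this is where your homotopy $F_t=\det(M^*+tE)$ breaks down. The straight sides of your half-annulus run along the imaginary axis from $r_0$ to $R_0$. These radii come from the regularity of $A$ and from diagonal dominance, and the sides generally extend beyond $R(\rho_1,\rho_2)$. There, property 4 gives no bound on $E(z)$. Property 3 says nothing about the intermediate $F_t$, which are not characteristic functions of any system. So some $F_t$ may vanish on the boundary, and you cannot move $\alpha^*$ closer to $\alpha$.

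The remedy is to freeze the perturbation. Suppose $\chi(\lambda)=0$ with $\re\lambda\ge0$. By property 3, $\lambda\in R(\rho_1,\rho_2)$, so $E_0:=\diag(\lambda^{\alpha^*}-\lambda^{\alpha})$ is a constant matrix with $\|E_0\|_2<\epsilon$. Moreover, $\det(\diag(\lambda^{\alpha^*})-A-E_0)=\chi(\lambda)=0$. Now consider $G_s(z)=\det(\diag(z^{\alpha^*})-A-sE_0)$ for $0\le s\le1$:
\begin{itemize}
\item the hypothesis gives $\nu\ge\epsilon-s\|E_0\|_2>0$ on the \emph{entire} imaginary axis;
\item $\nu(A)\ge\epsilon>\|E_0\|_2$ keeps zeros away from $0$;
\item growth keeps zeros away from $\infty$.
\end{itemize}
Hence the number of zeros of $G_s$ in the right half plane stays $0$ from $s=0$ (stability of \eqref{eq:lin-system2}) to $s=1$. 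This contradicts $G_1(\lambda)=0$; note that $\re\lambda>0$, since $G_1$ has no zeros on the axis. This is precisely the stability-radius identity for stable systems combined with the argument behind \cite[Theorem 4.6]{DHTT2024}.
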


We will give the proof in Appendix \ref{app:proof46}.

The complete algorithm for handling the problem at hand then comprises two steps:
\begin{enumerate}
\item Determine 
	\begin{enumerate}
	\item a suitable value for $\epsilon$ and
	\item an $\epsilon$-rational approximation $\alpha^*$ for $\alpha$ 
	\end{enumerate}
	such that $\min \{ \nu( \diag(z^{\alpha^*}) - A) : \re z = 0 \} \ge \epsilon$.
\item Use our Algorithm~\ref{alg:1} above to investigate the asymptotic stability of the 
	system \eqref{eq:lin-system2}. 
\end{enumerate}

For the two parts of Step 1, we may use procedures outlined in \cite{DHTT2024}. 
Specifically, for a given value of $\epsilon$, \cite[Algorithm 1]{DHTT2024} describes how to find
a corresponding $\epsilon$-rational approximation for $\alpha$ and thus addresses part (b) of Step 1. 
Moreover, at least for matrices $A$ with certain properties, \cite[Steps 1 and 2 of Algorithm 2]{DHTT2024}
provide a strategy for finding some $\epsilon$ that satisfies the requirements and therefore 
deals with part (a) of Step 1. For general matrices, however, the problem does not seem to be
completely solved yet.

\begin{remark}
	When resolving such stability questions with the help of standard computing platforms in
	finite precision arithmetic, irrational numbers can never be represented exactly. Therefore,
	the considerations of Section \ref{sec:alpha-irrational} are primarily of theoretical interest.
\end{remark}

\section{Examples}
\label{sec:examples}

To demonstrate the performance of our algorithm, we now present two examples. 

As indicated in Section \ref{sec:comparison} above, for the relatively simple 
three-or four-dimensio\-nal examples presented in \cite{DHTT2024}, our algorithm
generates the same results as the method introduced there (up to the influence of 
small rounding errors in the location of the zeros of the characteristic function).
In contrast to the approach of \cite{DHTT2024}, it is however a straightforward matter to also
consider examples in more dimensions.

\subsection{A Linear Problem}

Our first example is the problem \eqref{eq:lin-system} with the $(8 \times 8)$ matrix
\begin{equation}
	\label{eq:ex1}
	A = \left( \begin{array}{rrrrrrrr}
			26.2  &  12.5  &  14.1  &  13.4  &  18. 0 &  -24.1 &  -20.8  &  -13.4 \\
			1.1   & -1. 0  &  3.5   &  3.6   &  1.2   &       0.0  &  -4.8  &  -1. 0 \\
			25.6  &  10.5  &  10.9  &  14.1  &  16.6  &  -20.7  & -19. 0  & -13.6 \\
			-49.3  & -21.7  &  -21.5 &  -26.1 &  -27.8  &   40.7 &   33.8  &  28.8 \\
			-6.6   &  1.1  &   3.9  &   4.9  &  -6.1  &   5.5  &  -2.5  &  -0.5 \\
			-2.4   &  0.2  &  -2.6  &  -1. 0  &  3.1  &  -0.8   &  5. 0  &  6.3 \\
			-11.4  &  -6.3  &  -2.4  &  -4.5  &  -4. 0  &  11.1  &   4.6  &   8.6 \\
			32.3   & 17. 0 &   21.3  &  21.7  &  14.5 &  -28.7 &  -33.6  &  -26.7 
		\end{array} \right) .
\end{equation}
To exhibit the influence of changes in the order vector $\alpha$, we consider the
differential equation system with this matrix and the following choices of $\alpha$:
\[
\begin{array}{rrcl}
	\text{(a)} \quad & \alpha_1 &=& (0.9, 0.72, 0.54, 0.72, 0.6, 0.72, 0.18, 0.3) \smallskip \\
		&&=& \displaystyle 0.9 \cdot \left( 1, \frac 4 5, \frac 3 5, \frac 4 5, \frac 2 3, \frac 4 5, \frac 1 5, \frac 1 3 \right), 
		\smallskip \\
	\text{(b)} \quad & \alpha_2 &=& (0.96, 0.84, 0.72, 0.84, 0.72, 0.84, 0.24, 0.36) \smallskip \\
		&&= & \displaystyle 0.96 \cdot \left( 1, \frac 7 8, \frac 3 4, \frac 7 8, \frac 3 4, \frac 7 8, \frac 1 4, \frac 3 8 \right),
		\smallskip \\
	\text{(c)} \quad & \alpha_3 &=& (0.72, 0.54, 0.36, 0.54, 0.48, 0.54, 0.12, 0.18) \smallskip \\
		&&= & \displaystyle 0.72 \cdot \left( 1, \frac 3 4, \frac 1 2, \frac 3 4, \frac 2 3, \frac 3 4, \frac 1 6, \frac 1 4 \right),
		\smallskip \\
	\text{(d)} \quad & \alpha_4 &=& (0.96, 0.72, 0.84, 0.6, 0.48, 0.9, 0.12, 0.36) \smallskip \\
		&&= & \displaystyle 0.96 \cdot \left( 1, \frac 3 4, \frac 7 8, \frac 5 8, \frac 1 2, \frac {15} {16}, \frac 1 8, \frac 3 8 \right).
\end{array}
\]

In this example, we first note that the matrix $A$ has two eigenvalues in the right half of $\mathbb C$.
Therefore, the first order system $D^1 x(t) = A x(t)$ is unstable. However, it is clear that modifying the
orders of the differential operators changes the stability properties of the system, and therefore the instability
of the first order system does not imply anything about the stability or instability of the incommensurate
fractional order systems. 

Indeed, when we apply our Algorithm~\ref{alg:1} with $\alpha = \alpha_1$, 
we obtain $\sigma = 15$, so (since $d=8$) the eigenvalue problem
$\mu X v = Y v$ has dimension $\sigma d = 120$. Moreover, $\tilde \alpha = 0.9$, and we find that 
$N = \sum_{k=1}^d q_k = 78$. Hence,
the eigenvalue problem has 78 finite eigenvalues that we have plotted in Figure~\ref{fig:ex1-mu}. The blue lines
in the graph are the boundaries of the wedge $\{ z \in \mathbb C : |\arg(z) | \le \pi \tilde \alpha / \sigma \}$,
and the magenta lines bound the wedge $\{ z \in \mathbb C : |\arg(z) | \le \pi \tilde \alpha / (2\sigma) \}$. 
In the terminology of Remark \ref{rem:categories}, we see that 74 of the finite eigenvalues fall 
into category 3 and thus are irrelevant for our analysis. Moreover, no eigenvalue is in category 2. Therefore,
we only have four eigenvalues $\mu$ in category 4 that need to be inspected further to see whether
they are in subcategory 4(a) or 4(b). These four eigenvalues are located at
\begin{align*}
	\mu_{1,2} &\approx 0.9725 \pm 0.1299 \text{i} & \text{ with } & &  |\arg(\mu_{1,2})| & \approx 0.0423 \pi, \\
	\mu_{3,4} &\approx 1.0892 \pm 0.1487 \text{i} & \text{ with } & &  |\arg(\mu_{3,4})| & \approx 0.0432 \pi.
\end{align*}
For the sake of exposition, we explicitly compute the zeros $\lambda_k = \mu_k^{\sigma / \tilde \alpha}$ 
($k = 1, 2, 3, 4$) of the characteristic function $\chi$ for these eigenvalues and obtain
\begin{align*}
	\lambda_{1,2} & \approx -0.4364 \pm 0.5828 \text{i} \qquad \text{and} \\
	\lambda_{3,4} & \approx -3.0819 \pm 3.7337 \text{i},
\end{align*}
all of which have a negative real part, so the differential equation system \eqref{eq:lin-system} with
the coefficient matrix $A$ from \eqref{eq:ex1} is asymptotically stable.
The plot of the numerical solution of this system given in Fig.~\ref{fig:ex1-numsol} (generated with
Garrappa's MATLAB implementation of the trapezoidal method \cite{DGU2023,G2018} with a step size $h = 0.1$) 
visualizes that all components tend to zero as $t \to \infty$ and thus confirms this.

\begin{figure}[htb]
	\centering
	\includegraphics[width=0.7\textwidth]{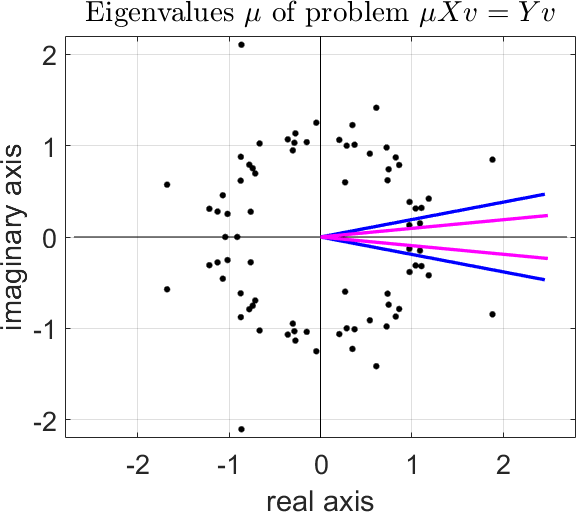}
	\caption{\label{fig:ex1-mu}Location of the 78 finite eigenvalues $\mu$ for the system \eqref{eq:lin-system} with 
		the matrix $A$ given in \eqref{eq:ex1} and $\alpha = \alpha_1$.
		The blue rays separate the eigenvalues in category 3 from those in category~4,
		the magenta rays indicate the boundary between subcategories 4(a) and 4(b).
		It can be seen that subcategory 4(a) is empty, and so the system is asymptotically stable.}
\end{figure}

\begin{figure}[htb]
	\centering
	\includegraphics[width=\textwidth]{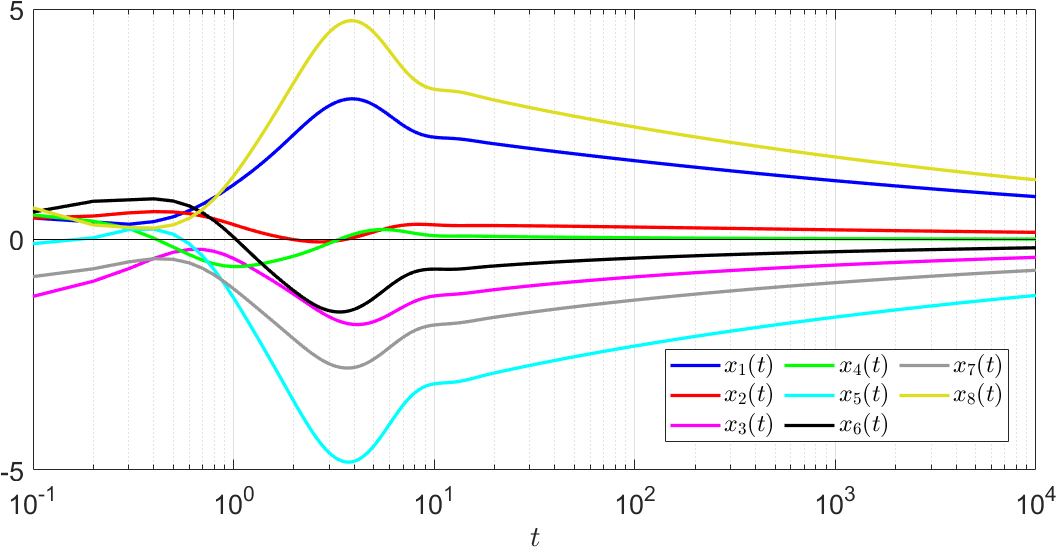}
	\caption{\label{fig:ex1-numsol}The eight components of the numerical solution of the differential equation
		system \eqref{eq:lin-system} with the matrix $A$ given in \eqref{eq:ex1}, $\alpha = \alpha_1$ and initial condition 
		$x(0) = (1, 0, -2, 0.5, -1, 1.5, -2, 0)^{\text T}$. Note that the horizontal axis has a logarithmic scale.}
\end{figure}

An explicit computation of $\chi(\lambda_k)$ for $k = 1, 2, 3, 4$ results in function values satisfying
$| \chi(\lambda_k) | < 7 \cdot 10^{-8}$, which again is at least an indication that the zeros of $\chi$ have been found 
with a good accuracy.

Table \ref{tab:results} summarizes the findings for all four cases. We particularly emphasize the following observations:
\begin{itemize}
\item Since $\det A \ne 0$, we never have any eigenvalues in category 2 according to the classification of 
	Remark \ref{rem:categories}.
\item $N$ is substantially smaller than $\sigma \cdot d$ for all cases that we have considered in this example.
	This underlines the message already mentioned in the derivation of Algorithm \ref{alg:1} 
	in Section \ref{sec:rational} above: Computing only the $N$ finite eigenvalues instead of all 
	$\sigma\cdot d$ (finite and infinite) eigenvalues, i.e.\ using the MATLAB "eigs" function rather than "eig", 
	can significantly reduce the computational cost.
\item When using the asymptotically stable case (a) as a baseline, case (c) is obtained by reducing \emph{all}
	components of $\alpha$. The resulting system is also asymptotically stable. This is expected since it is generally
	known that reducing the orders improves the stability properties \cite{DLL2007,M1998}.
\item In contrast, for cases (b) and (d) some components of $\alpha$ have been increased
	in comparison to case (a). Depending on which 
	components have been increased and by how much, this may or may not lead to the system becoming unstable. 
	A precise analysis of the influence of each component's change on the stability properties is an open
	problem.
\end{itemize}

\begin{table}[htb]
	\centering
	\begin{tabular}{c|c|c|c|c|c|c|c}
		& & & & \multicolumn{3}{c|}{number of eigenvalues $\mu$ of $\mu X v = Yv$} & asymptotic\\
		$\alpha$ & $\sigma$ & $\sigma \cdot d$ & $N = \sum_{k=1}^d q_k$ & 
			in cat.\ 3 & in cat.\ 4(a) & in cat.\ 4(b) & stability \\
		\hline
		$\alpha_1$ & $15$ & $120$ & $78$ & $74$ & $0$ & $4$ & yes  \\
		$\alpha_2$ &  $\phantom{0}8$  & $\phantom{0}64$  & $46$ & $42$  & $0$ & $4$ & yes \\
		$\alpha_3$ & $12$ & $\phantom{0}96$ & $58$ & $54$ & $0$ & $4$ & yes \\
		$\alpha_4$ & $16$ & $128$ & $83$  & $81$ & $2$ & $0$ & no \\
	\end{tabular}
	\caption{\label{tab:results}Summary of the results for the four problems considered associated to the system 
		\eqref{eq:lin-system} with the matrix $A$ from \eqref{eq:ex1} and different choices of $\alpha$.
		The system is unstable if eigenvalues in category 4(a) exist.}
\end{table}

\subsection{A Nonlinear Problem}
\label{subs:nonlin-example}

To demonstrate the application of our approach also to nonlinear problems, we will now look at the
five-dimensional equation system
\begin{equation}
	\label{eq:ex2}
	D^\alpha x(t) = \begin{pmatrix}
				-2.5 x_1(t) + x_2(t) + 0.5 x_5(t) \\
				-x_1(t) - 3 x_2(t) + 1.2 x_3(t) + x_2^2(t) \\
				-1.2 x_2(t) - 2.8 x_3(t) + 1.5 x_4(t) \\
				-1.5 x_3(t) - 3.5 x_4(t) + x_5(t) + x_4^3(t) \\
				-0.5 x_1(t) - x_4(t) - 4 x_5(t)
				\end{pmatrix}
\end{equation}
with $\alpha = (0.577, 0.408, 0.318, 0.367, 0.277)$.
Following the idea outlined in \cite[Subsection 5.2]{DHTT2024}, we split up the system \eqref{eq:ex2} into its linear
and nonlinear parts as
\[
	D^\alpha x(t) = A x(t) + g(x(t))
\]
with
\[
	A = \begin{pmatrix}
			-2.5 & \phantom{-}1.0 & \phantom{-}0.0 & \phantom{-}0.0 & \phantom{-}0.5 \\
			-1.0 & -3.0 & \phantom{-}1.2 & \phantom{-}0.0 & \phantom{-}0.0 \\
			\phantom{-}0.0 & -1.2 & -2.8 & \phantom{-}1.5 & \phantom{-}0.0 \\
			\phantom{-}0.0 & \phantom{-}0.0 & -1.5 & -3.5 & \phantom{-}1.0 \\
			-0.5 & \phantom{-}0.0 & \phantom{-}0.0 & -1.0 & -4.0
		\end{pmatrix}
\]
and 
\[
	g(x) = (0, x_2^2, 0, x_4^3, 0)^{\mathrm T}.
\]
It is clear that $g(0) = 0$ and that the Jacobi matrix $J_g$ of $g$ is given by
\[
	J_g(x) = \begin{pmatrix}
			0 & 0 & 0 & 0 & 0 \\
			0 & \, 2 x_2 \, & 0 & 0 & 0 \\
			0 & 0 & 0 & 0 & 0 \\
			0 & 0 & 0 & \, 3 x_4^2 \, & 0 \\
			0 & 0 & 0 & 0 & 0 
		\end{pmatrix}.
\]
Therefore, $\| J_g(x) \| \to 0$ whenever $x \to 0$ (where $\| \cdot \|$ denotes an arbitrary matrix
norm). By Taylor's Theorem, this implies
\[
	\lim_{r \to 0} \sup \left\{ \frac{\| g(z) - g(\tilde z) \|}{\|z - \tilde z\| }  : \| z \|, \| \tilde z \| \le r \right \}= 0.
\]
From this observation, it follows that the stability properties of the nonlinear system \eqref{eq:ex2} are 
essentially determined by the stability properties of its linear component
\begin{equation}
	\label{eq:ex2lin}
	D^\alpha x(t) = A x(t).
\end{equation}
More precisely, we have:
\begin{itemize}
\item If the linear system \eqref{eq:ex2lin} is unstable then the nonlinear system \eqref{eq:ex2}
	is also unstable \cite[Section 3]{TSSC2017}.
\item If the linear system \eqref{eq:ex2lin} is asymptotically stable then the nonlinear system \eqref{eq:ex2}
	is Mittag-Leffler stable, i.e.\ there exists some $\delta > 0$ such that each solution of the system \eqref{eq:ex2} 
	subject to the initial condition $x(0) = x^*$ with $\| x^* \| \le \delta$
	satisfies $\| x(t) \| = O(t^{-\min_j \alpha_j})$ as $t \to \infty$ and hence
	$\lim_{t \to \infty} x(t) = 0$ \cite[Theorem 5.5]{DHTT2024}.
\end{itemize}
It therefore suffices to investigate the stability properties of the linear system \eqref{eq:ex2lin}, which is what
our algorithm has been designed for. 

To apply our approach to the system \eqref{eq:ex2lin}, we note that we have $d=5$, $\tilde \alpha = \max_j \alpha_j = 0.577$,
$r = (1, 408, 318, 367, 277)$ and $s = (1, 577, 577, 577, 577)$. Our algorithm therefore needs to find the $N = 577 + 408 + 
318 + 367 + 277 = 1947$ finite eigenvalues of the generalized eigenvalue problem $\mu X v = Y v$ whose dimension
is $d \cdot \sigma = 5 \cdot 577 = 2885$. It turns out that all these eigenvalues fall into category 3 of our classification above
(see Figure \ref{fig:ex2-mu}). Therefore, the linear system \eqref{eq:ex2lin} is asymptotically stable, and thus the
nonlinear system \eqref{eq:ex2} is Mittag-Leffler stable in the sense indicated above. To illustrate this observation, 
we have plotted the components
of its solution subject to the initial condition $x(0) = (1, 0, -2, 0.5, -1)^{\mathrm T}$ in Figure \ref{fig:ex2-numsol}.

\begin{figure}[htb]
	\centering
	\includegraphics[height=0.4\textwidth]{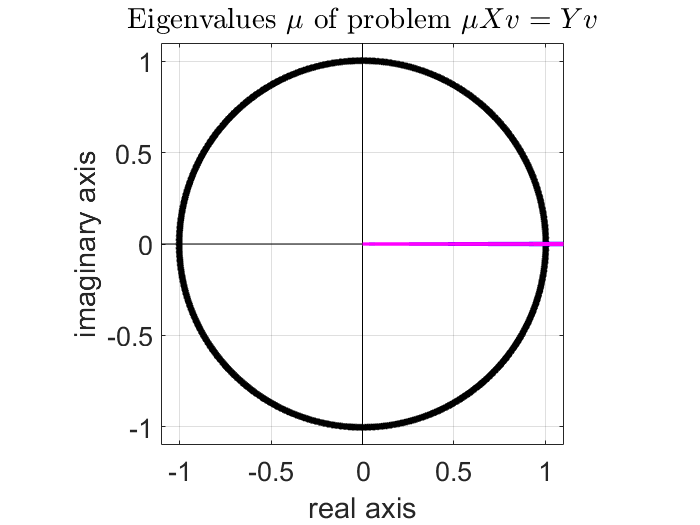}
	\hfill
	\includegraphics[height=0.4\textwidth]{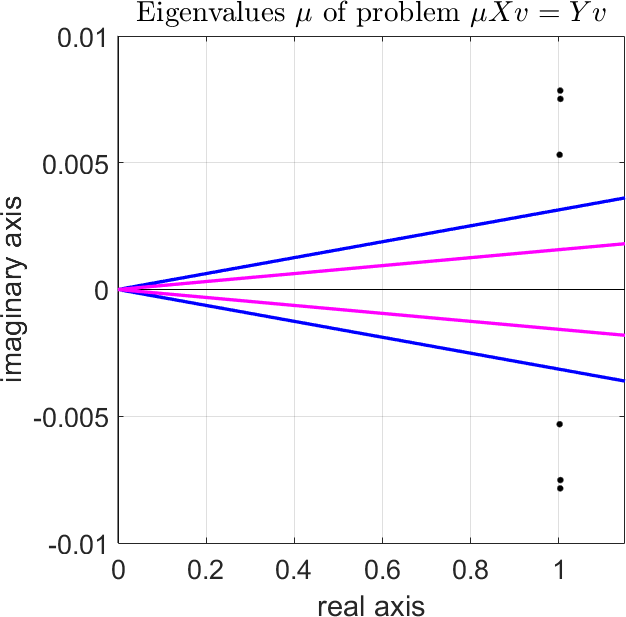}
	\caption{\label{fig:ex2-mu}Location of the 1947 finite eigenvalues $\mu$ for the system \eqref{eq:ex2lin} (left)
		and zoom into this figure to emphasize the regions belonging to subcategories 4(a) and 4(b) (right).
		The blue rays separate the eigenvalues in category 3 from those in category~4,
		the magenta rays indicate the boundary between subcategories 4(a) and 4(b).
		It can be seen that subcategories 4(a) and 4(b) are empty, and so the system is asymptotically stable.}
\end{figure}

\begin{figure}[htb]
	\centering
	\includegraphics[width=\textwidth]{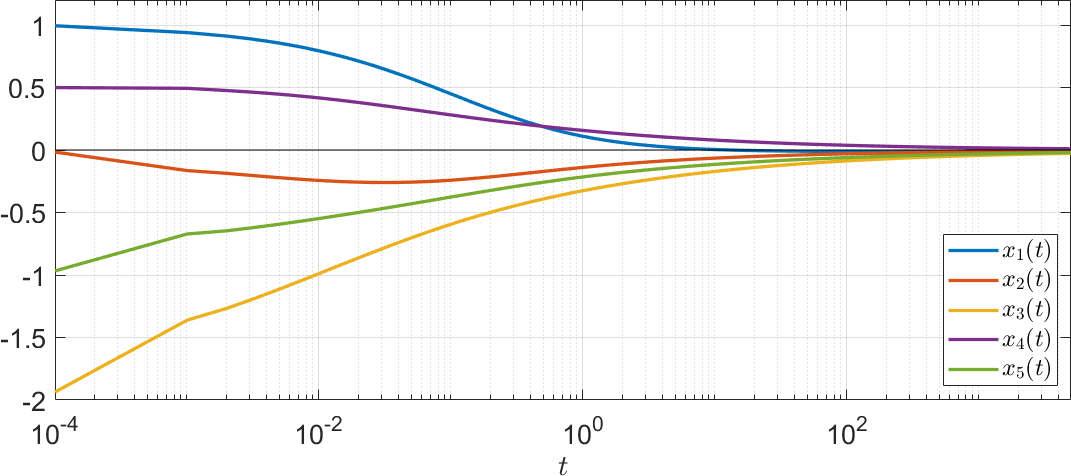}
	\caption{\label{fig:ex2-numsol}The five components of the numerical solution of the differential equation
		system \eqref{eq:ex2} with the initial condition 
		$x(0) = (1, 0, -2, 0.5, -1)^{\text T}$. Note that the horizontal axis has a logarithmic scale.}
\end{figure}

\appendix

\section{Proof of Theorem \ref{thm:46}}
\label{app:proof46}

In this appendix, we present the proof of our Theorem \ref{thm:46}. 
To this end, we first show an auxiliary result. In its formulation, we denote the
characteristic function of the differential equation system $D^\beta x(t) = B x(t)$
with some multi-index $\beta \in (0,1]^d$ and some real $(d\times d)$ matrix $B$ 
by $\chi[B, \beta]$, i.e.\ we set
\[
	\chi[B, \beta](\lambda) = \det \left ( \diag(\lambda^\beta) - B \right)
\]
and use the notation
\begin{align}
	\label{eq:def-delta}
	\delta^2_{\beta}(B) 
	= \inf \{ \| E \|_2: & \text{ } E \in \mathbb C^{d \times d}  \text{ and } \\
	\nonumber
	& \text{ at least one zero of } \chi[B+E, \beta] 
			\text{ is not in the open left half of } \mathbb C \}.
\end{align}

\begin{lemma}
	\label{lem:ext46}
	Let $d \in \mathbb N$, $A$ be a given real $(d \times d)$ matrix and $\beta \in (0,1]^d$. 
	If there exists some $\epsilon > 0$ such that $\delta^2_{\beta}(A) \ge \epsilon$ then all zeros
	of $\chi[A, \beta]$ are located in the open left half of the complex plane.
\end{lemma}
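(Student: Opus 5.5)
This follows by contraposition directly from the definition \eqref{eq:def-delta} of $\delta^2_\beta(A)$. Suppose that some zero $\lambda_0$ of $\chi[A,\beta]$ is not in the open left half of $\mathbb C$. We then exhibit a perturbation $E$ that is admissible in the infimum defining $\delta^2_\beta(A)$, with $\|E\|_2$ as small as we like. The natural choice is $E = 0$.

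Concretely, the matrix $E = 0 \in \mathbb C^{d\times d}$ satisfies $\chi[A+E,\beta] = \chi[A,\beta]$. By our assumption, this function has the zero $\lambda_0$ with $\re \lambda_0 \ge 0$. Hence $E=0$ belongs to the set over which the infimum in \eqref{eq:def-delta} is taken. This set is therefore nonempty and contains an element of norm $0$, so $\delta^2_\beta(A) \le \|0\|_2 = 0$. Since norms are nonnegative, $\delta^2_\beta(A) = 0$. This contradicts $\delta^2_\beta(A) \ge \epsilon > 0$. Consequently, every zero of $\chi[A,\beta]$ must lie in the open left half-plane.

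The only point requiring care is the convention for the infimum of the empty set. If no perturbation $E$ whatsoever produces a zero outside the open left half-plane, we take $\delta^2_\beta(A) = +\infty$. This case is consistent with the conclusion and does not affect the argument, because in the contrapositive direction we have shown that the set is nonempty. I expect no real obstacle here: the lemma is essentially a restatement of the definition. Its role is to serve as the bridge in the proof of Theorem~\ref{thm:46}. There one would relate $\delta^2_{\alpha^*}(A)$ to $\min\{\nu(\diag(z^{\alpha^*})-A) : \re z = 0\}$ via the identity $\nu(M) = \min\{\|E\|_2 : M - E \text{ singular}\}$ together with a continuity argument in $z$ and in the orders, using property 4 of the $\epsilon$-rational approximation. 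That part, and not the present lemma, is where the substantive work lies.
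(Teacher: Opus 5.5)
Your proof is correct and essentially identical to the paper's: both assume a zero outside the open left half-plane, note that $E=0$ is then admissible in the infimum defining $\delta^2_\beta(A)$, and conclude $\delta^2_\beta(A)=0$, contradicting $\delta^2_\beta(A)\ge\epsilon>0$. Your remark on the empty-set convention is a harmless extra that the paper does not need to address.
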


\begin{proof}
	Assume that the statement is false, i.e.\ that $\chi[A, \beta]$ has a zero which is not 
	in the open left half of $\mathbb C$.
	Then, denoting the null matrix of size $d \times d$ by $0$, we see that
	$\chi[A+0, \beta]$ has a zero outside of the open left half of $\mathbb C$.
	Therefore, by definition of $\delta^2_\beta$, see eq.~\eqref{eq:def-delta},
	we have that $\delta^2_{\beta}(A) = 0$ in contradiction with our assumption that
	$\delta^2_{\beta}(A) \ge \epsilon > 0$.
	\qed
\end{proof}

\begin{proof}[of Theorem \ref{thm:46}]
	We consider the following three statements:
	\begin{enumerate}
	\item[(a)] The system \eqref{eq:lin-system} is asymptotically stable.
	\item[(b)] There exist some $\epsilon > 0$ and an $\epsilon$-rational approximation $\alpha^*$ of $\alpha$ 
		such that the corresponding system \eqref{eq:lin-system2} is asymptotically stable and 
		$\delta^2_{\alpha^*}(A) \ge \epsilon$.
	\item[(c)] There exist some $\epsilon > 0$ and an $\epsilon$-rational approximation $\alpha^*$ of $\alpha$ 
		such that $\delta^2_{\alpha^*}(A) \ge \epsilon$.
	\end{enumerate}
	By  \cite[Theorem 4.6]{DHTT2024}, statement (a) is equivalent to (b). 
	It therefore remains to show that (b) and (c) are also equivalent to each other. 
	The implication (b) $\Rightarrow$ (c) is trivial. For the implication
	(c) $\Rightarrow$ (b) we can see that, by Lemma \ref{lem:ext46}, (c) implies that all zeros of
	$\chi[A, \alpha^*]$ are in the open left half of $\mathbb C$, and this implies the
	asymptotic stability of the system \eqref{eq:lin-system2}.
	The statement of Theorem \ref{thm:46} then follows since
	$\delta^2_{\alpha^*}(A) = \min \{ \nu(\diag(z^{\alpha^*}) - A) : \re z = 0 \}$,
	see \cite[eq.~(29)]{DHTT2024}.
	\qed
\end{proof}

\color{black}

\section*{Declarations}

\paragraph{Funding}
This work has not received any external funding. 

\paragraph{Data Availability}
The software code created within the study described in this paper is available for download 
at \cite{DH:zenodo-stability}. No other datasets were generated.

\paragraph{Conflicts of Interest}
The authors declare that there is no conflict of interest.

\paragraph{Author Contributions}
Both authors contributed to the development and analysis of the algorithm
and to the writing and reviewing of the manuscript.

\end{document}